\newtheorem{thm}{Theorem}[section]
\newtheorem{lem}[thm]{Lemma}
\begin{document}

\title[Simple closed curves on the punctured Klein bottle]{A Classification of Fundamental Group Elements Representing simple closed curves on the punctured Klein Bottle}
\author{Daniel Gomez }
\email{Danniel.gomez87@gmail.com} 

\maketitle

\begin{abstract}
\bf In this paper we provide a classification of fundamental group elements representing simple closed curves on the punctured Klein bottle, Similar to the Birman-Series classification of curves on the punctured torus\cite{Be}.  In the process, an explicit description of the mapping class group is given.  We then apply this to give a counterexample the simple loop conjecture for representations from the Klein bottle group to $\mathit{PGL}(2,\mathbb{R})$.
\end{abstract}

\section{\textbf{Introduction}}
\begin{figure}[ht]
\begin{center}
\includegraphics[scale=.5]{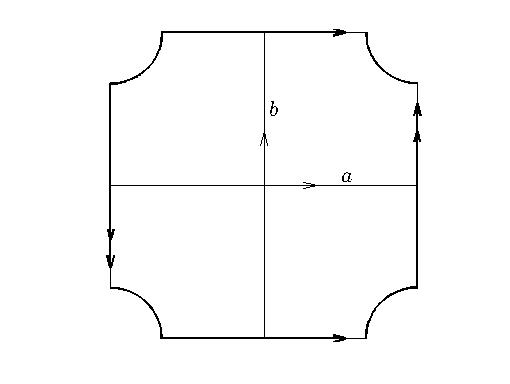}
\caption{A depiction of $K$ where $\pi_1(K)=\left<a,b\right>$}
\end{center}
\end{figure}

The fundamental group of a punctured torus $T$ is generated by a pair of elements represented by simple closed curves intersecting once.  Given such a generating set, Theorem 5.1 of \cite{Be} explicitly describes all words in $\pi_1 T$ that are represented by simple closed curves on $T$.  This has proven useful in various contexts, see eg.~\cite{Na} and \cite{Ma}.  The main result of this paper is Theorem 3.3, which describes the analogous class $\mathcal{C}$ of fundamental group words represented by simple closed curves on the punctured Klein bottle, $K$, in terms of the generators pictured in Figure 1.

The strategy of the proof is as follows.  In Section 2 we record a consequence of work of Korkmaz \cite{Ko}, Lemma 2.2, which gives an explicit three-element generating set for the mapping class group of $K$.  In Lemma 3.2 we list four elements of $\mathcal{C}$ with the property that every simple closed curve on $K$ is mapping class group-equivalent to one representing a list element (this follows from the classification of surfaces).  We compute the action of each generator on each list element, and use this to show that the mapping class group action on $\pi_1 K$ preserves $\mathcal{C}$ with four orbits, one for each of the original elements.

In the final section of this paper,  we provide a counter example to the simple loop conjecture for the punctured Klein Bottle in $PGL(2,\mathbb{R})$. The simple loop conjecture states, any non-injective homomorphism from a closed orientable surface group to another closed orientable surface group must contain an element representing a simple closed curve in the kernel, which was proved true in 1985 \cite{Ga}. In 2014 Katherine Mann shows that on all compact orientable surfaces with boundary and genera at least 1 there are non-injective representations in $SL(2,\mathbb{R})$ which kill no simple closed curve \cite[Theorem 1.2]{Ma}.  She then extends the result to  to nonorientable surfaces of negative Euler characteristic and  nonorientable genus $g\geq 2$, not the punctured Klein bottle nor the closed nonorientable genus-3 surface by changing the target group to $PGL(2,\mathbb{R})$\cite[Theorem 1.3]{Ma}.  Our work uses Theorem 3.3 to extend \cite[Theorem 1.3]{Ma} to the punctured Klein bottle.

 \section{\textbf{Mapping Class Group of $K$} }

To give a description of the mapping class group of the punctured Klein bottle, $K$, we define the necessary diffeomorphisms to generate the mapping class group and provide notation for each. We then prove these particular maps are indeed the elements that generate the group.

The first is a diffeomorphism  called the $Dehn\  twist$. It is supported in an oriented regular neighborhood of any simple closed curve. In the case of the punctured Klein bottle, the necessary Dehn twist is defined about the simple closed curve represented by $b \in \pi_1(K)$, which  can be written explicitly as the map  $S^1\times I$ defined by $(\theta, t)\to(\theta+2\pi t,t)$, since the curve represented by $b$ is simple.  Intuitively, this can be thought of by separating the surface along $b$, rotating one end by $2\pi$ and gluing the surface back together. We borrow notation from Korkmaz and denote the Dehn twist about $b$ and its isotopy class $t_b$\cite{Ko}. Below record the action of $t_b$ on the generators, $a$ and $b$, of the $\pi_1(K)$ group from Figure 1.
\[ t_b: \left\{ \begin{array}{l}
      a \mapsto ab\\
        b \mapsto b\end{array} \right. \]
The $cross\  cap\  slide$, or $Y\!-\!homeomorphism$ is a map defined on non-orientable surface with genus at least two.  Consider the punctured Klein bottle as obtained from an open Mobius band $M$ with one hole (ie. a missing open disk) by identifying antipodal points on the boundary of the hole.  The map is induced by sliding the hole around the core of $M$, then again identifying points of its boundary \cite{Ko}.  Alternatively, If we consider $K$ to be $S^1\times I$ with its ends identified in opposite orientation, with one puncture as above, we can describe The $Y-homomorphism$ as induced by the reflection of the cylinder about $S^2\times \frac{1}{2}$\cite{Li}. We reserve $y$ to represent the $y$-homeomorphism and the associated isotopy class.

\[ y: \left\{ \begin{array}{l}
      a \mapsto a^{-1}\\
       b \mapsto b\end{array} \right. \]

The last of the mapping class group generators is the diffeomorphism supported in a neighborhood of an orientation reversing simple closed curve defined by pushing the puncture once along the curve. This action is called the $boundary\ slide$, and acts trivially along the boundary of any puncture\cite{Ko}. 
     \begin{figure}[ht]
\begin{center}
\includegraphics[scale=.4]{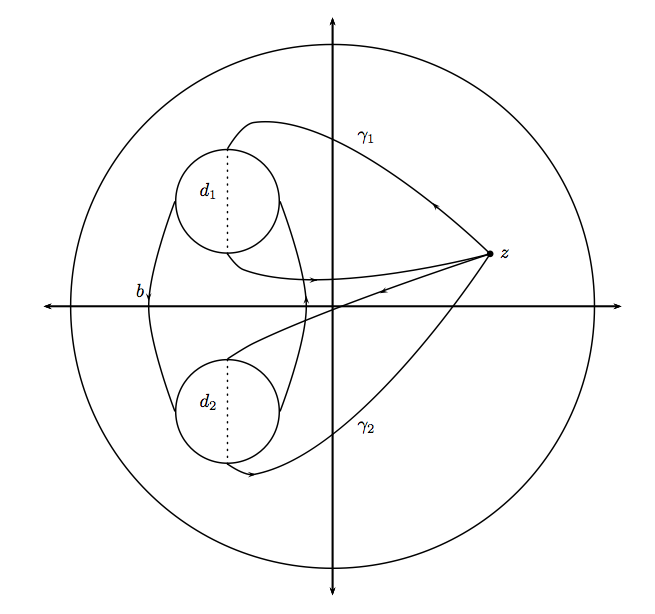} 
 \caption{Another model of $K$ in the plane}
\end{center}

\end{figure}
\begin{figure}[ht]
\begin{center}
 \includegraphics[scale=.4]{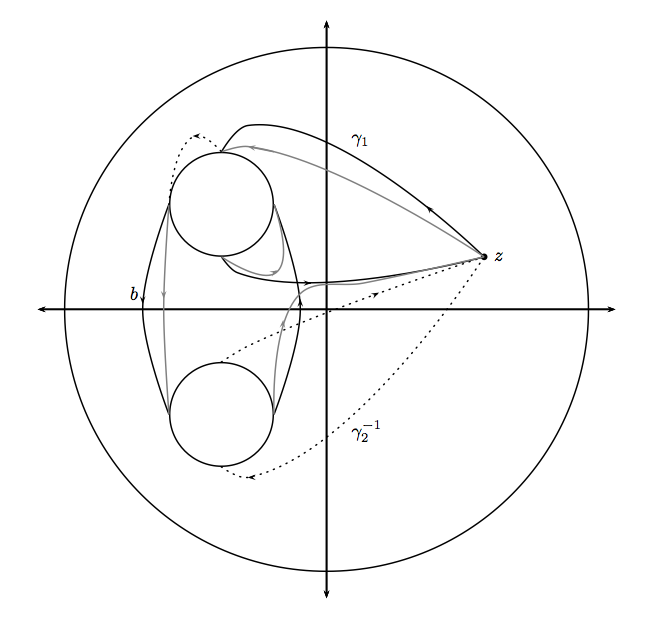}
 \caption{ $\gamma_2^{-1}=t_b(\gamma_1)$ after isotopy.}
\end{center}

\end{figure}

Our description of the mapping class group generators is a corollary to the following result of Korkmaz\cite[Theorem 4.9]{Ko}:

\begin{thm} Let S be a non orientable surface of genus 2 with 1 puncture.  The Pure mapping class group  $\mathcal{M}\mathcal{P}_{2,1}$ of $S$ is generated by $\{t_b, y, w_1,v_1\}$, where $w_1$ and $v_1$ are the boundary slides aroung the curves $\gamma_1$ and $\gamma_2 $ of Firgure 2 respectively.\end{thm}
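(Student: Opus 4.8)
The plan is to run the Birman (point-pushing) exact sequence, filling in the puncture of $S$ to obtain the closed Klein bottle $\bar S$. With only one puncture the pure mapping class group $\mathcal{M}\mathcal{P}_{2,1}$ is just the mapping class group of $\bar S$ with a marked point, and since the Klein bottle group $\pi_1(\bar S)\cong\langle a,b\mid abab^{-1}\rangle$ has trivial center, the relevant connecting map $\pi_1(\mathrm{Diff}_0\bar S)\to\pi_1(\bar S)$ vanishes (its image is always central), so the sequence is exact on the left:
\[ 1\longrightarrow\pi_1(\bar S)\longrightarrow\mathcal{M}\mathcal{P}_{2,1}\longrightarrow\mathcal{M}(\bar S)\longrightarrow 1. \]
It then suffices to (i) exhibit a generating set of $\mathcal{M}(\bar S)$ lifting to $\{t_b,y\}$, and (ii) show the point-pushing kernel is generated by $\{w_1,v_1\}$.

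For (i) I would invoke Lickorish's computation that $\mathcal{M}(\bar S)\cong\mathbb{Z}/2\oplus\mathbb{Z}/2$, generated by the classes of the Dehn twist about the two-sided nonseparating curve and of the crosscap slide; both admit representatives supported away from the marked point, so they are precisely the images of $t_b$ and $y$. For (ii) I would use the standard description of point-pushing maps: pushing the puncture once around an embedded two-sided loop is a product $t_{c_1}t_{c_2}^{-1}$ of Dehn twists about the two boundary curves of a regular neighborhood of the loop together with the puncture, while pushing it around a one-sided loop is a crosscap slide of the puncture along the associated M\"obius band, i.e.\ a boundary slide in Korkmaz's sense. With $\gamma_1,\gamma_2$ chosen as in Figure~2, the slides $w_1,v_1$ are the point-pushes along a pair of based loops that generate $\pi_1(\bar S)$ (the relation $\gamma_2^{-1}=t_b(\gamma_1)$ of Figure~3 shows the two loops differ by a $t_b$-twist and together generate the group), so they generate the kernel, and with $t_b,y$ they generate all of $\mathcal{M}\mathcal{P}_{2,1}$.

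The step I expect to be the main obstacle is (ii): one must check that $\{w_1,v_1\}$ is not merely contained in the point-pushing subgroup but actually generates it, which forces one to pin down exactly which based loops $w_1$ and $v_1$ push around and to verify that these loops generate $\pi_1(\bar S)$ --- the bookkeeping encoded in Figures~2 and~3. This is made more delicate by the exceptional low-complexity behavior of the Klein bottle: the relation $t_b^2=1$ already holds in $\mathcal{M}(\bar S)$, and a regular neighborhood of a one-sided curve is a M\"obius band, so the relevant point-push is a slide rather than a commutator of twists. A more self-contained alternative, bypassing Lickorish's theorem, would be to let $\mathcal{M}\mathcal{P}_{2,1}$ act on the complex of essential arcs from the puncture to itself, show the action is transitive on vertices with vertex stabilizers generated by the listed twist and slides, and conclude by the usual connectedness-plus-stabilizer argument; this replaces the input on $\mathcal{M}(\bar S)$ with a hands-on analysis of the arc complex of a small surface.
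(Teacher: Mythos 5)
The paper itself offers no proof of this statement at all: it is quoted as Theorem 4.9 of Korkmaz, so your Birman-exact-sequence derivation is an independent argument rather than a variant of anything in the paper. As a strategy it is viable: Lickorish's computation that the mapping class group of the closed Klein bottle $\bar S$ is $\mathbb{Z}/2\oplus\mathbb{Z}/2$, generated by the twist about the two-sided nonseparating curve and the crosscap slide, plus the fact that the kernel of $\mathcal{M}\mathcal{P}_{2,1}\to\mathcal{M}(\bar S)$ is the image of the point-pushing homomorphism, does reduce the theorem to identifying $w_1,v_1$ as the pushes along based loops that generate $\pi_1(\bar S)$.

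However, one of your stated steps is simply false: the Klein bottle group does not have trivial center. In your presentation $\langle a,b\mid abab^{-1}\rangle$ conjugation by $b$ inverts $a$, so $b^2$ is central and the center is infinite cyclic. Moreover the Klein bottle is exactly one of the exceptional $\chi=0$ cases: $\mathrm{Diff}_0(\bar S)$ is homotopy equivalent to a circle, and the evaluation map carries the generator of $\pi_1(\mathrm{Diff}_0\bar S)\cong\mathbb{Z}$ onto that central subgroup (the circle of translations of the flat Klein bottle realizes the central element as an orbit loop). So your sequence is genuinely not exact on the left: the point-pushing map has kernel the center, and the kernel of $\mathcal{M}\mathcal{P}_{2,1}\to\mathcal{M}(\bar S)$ is $\pi_1(\bar S)$ modulo its center (an infinite dihedral group), not $\pi_1(\bar S)$. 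This error happens to be harmless for your goal, since generation only requires the point-pushing map to surject onto the kernel, which holds in every form of the Birman sequence, but the claim must be removed or corrected. The second gap is in step (ii): observing that $\gamma_1$ and $\gamma_2$ differ by a $t_b$-twist does not show they generate $\pi_1(\bar S)$ --- two loops related by a mapping class need not generate anything. What you actually need, and what Figure 2 provides, is that $\gamma_1,\gamma_2$ are the two crosscap generators of the standard presentation $\langle x_1,x_2\mid x_1^2x_2^2\rangle$ (each passes exactly once through one crosscap), together with a verification against Korkmaz's definition that the boundary slide along a one-sided based loop is precisely the point-push along that loop. With those two repairs your outline becomes a correct proof; as written, it has a false assertion and an unproved generation claim at its core.
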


Here think of the punctured Klein bottle as the punctured 2-sphere after removing 2 interior open discs $d_1$ and $d_2$ then identifying antipodal points on each boundary. The 2-sphere is the compactification of the cartesian plane. Let the puncture be the removal of the point  $z$ . Let $\gamma_1$ be the simple closed curve based at $z$ traveling through $d1$ and  $\gamma_2$ be another simple closed curve based at $z$ traveling through $d2$. In this picture, $a$ is represented by the boundary of $d_2$, and b is a path connecting $d_1$ and $d_2$, Figure 2.   Let $w_1$ denote the boundary slide around $\gamma_1$ and let $v_1$ be the boundary slide around $\gamma_2$.

\begin{lem}
The mapping class group of $K$ is generated by $\{t_b,y,w_1\}$.
\end{lem}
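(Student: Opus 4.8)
The plan is to derive this from Korkmaz's Theorem~2.1, which asserts that the pure mapping class group $\mathcal{MP}_{2,1}$ is generated by the four elements $\{t_b,y,w_1,v_1\}$, by showing that the last generator $v_1$ is redundant — that is, $v_1$ already lies in $\langle t_b,y,w_1\rangle$. First I would dispense with the distinction between the pure and the full mapping class group: since $K$ has exactly one puncture, every self-homeomorphism of $K$ automatically fixes that puncture, so $\mathcal{MP}_{2,1}$ coincides with $\mathcal{M}(K)$. Thus it suffices to express $v_1$ as a word in $t_b$, $y$, and $w_1$.

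The key geometric input is the fact recorded in Figure~3: after an isotopy, $t_b(\gamma_1)=\gamma_2^{-1}$, i.e.\ the Dehn twist $t_b$ carries $\gamma_1$ to $\gamma_2$ with reversed orientation. I then combine this with two standard facts about boundary slides. (i) The change-of-coordinates principle: for any $f\in\mathcal{M}(K)$, conjugating the boundary slide around a curve $c$ by $f$ yields the boundary slide around $f(c)$; applied with $f=t_b$ and $c=\gamma_1$, this gives $t_b\,w_1\,t_b^{-1}=(\text{boundary slide around }t_b(\gamma_1))$. (ii) Reversing the orientation of the defining curve inverts the boundary slide (pushing the puncture once along $\gamma$ and then once along $\gamma^{-1}$ is isotopic to the identity), so the boundary slide around $\gamma_2^{-1}$ equals $v_1^{-1}$. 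Stringing these together,
\[
 t_b\,w_1\,t_b^{-1} \;=\; (\text{boundary slide around }t_b(\gamma_1)) \;=\; (\text{boundary slide around }\gamma_2^{-1}) \;=\; v_1^{-1},
\]
whence $v_1 = t_b\,w_1^{-1}\,t_b^{-1} \in \langle t_b,y,w_1\rangle$. Since $\{t_b,y,w_1,v_1\}$ generates $\mathcal{M}(K)$ by Theorem~2.1, so does $\{t_b,y,w_1\}$, which is the claim.

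The routine part is verifying the isotopy $t_b(\gamma_1)\simeq\gamma_2^{-1}$, which I would carry out by tracking the curve $\gamma_1$ through the twist in the planar model of Figure~2 and simplifying as depicted in Figure~3. The step I expect to be the main obstacle is pinning down facts (i) and (ii) with the correct conventions: the change-of-coordinates principle is classical for Dehn twists but must be checked to apply verbatim to a crosscap/boundary slide (which is supported near a one-sided curve rather than an annulus), and I must make sure that reversing the orientation of $\gamma_2$ genuinely \emph{inverts} $v_1$ rather than merely conjugating it. Both points come down to unwinding Korkmaz's definition of the boundary slide as ``pushing the puncture once along the curve'' and checking that traversing the curve in the opposite direction performs the inverse push; once that is settled, the displayed computation completes the proof.
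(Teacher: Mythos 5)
Your proposal is correct and follows essentially the same route as the paper: identify the pure and full mapping class groups using the single puncture, then eliminate $v_1$ from Korkmaz's four-element generating set via the isotopy $t_b(\gamma_1)=\gamma_2^{-1}$ and the conjugation identity $t_b\,w_1\,t_b^{-1}=v_1^{-1}$. In fact your write-up is more careful than the paper's (which garbles the conjugation as $t_b(\gamma_1)w_1t_b(\gamma_1)^{-1}$), since you make explicit both the change-of-coordinates principle for boundary slides and the fact that reversing the defining curve inverts the slide.
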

\begin{proof}Since $K$ has only one puncture, the mapping class group, and the pure mapping class group coincide.  In Figure 3 we show $t_b(\gamma_1)=\gamma_2^{-1}$.  It follows that the boundary slide around $\gamma_1^{-1}$ is $t_b(\gamma_1)w_1t_b(\gamma_1)^{-1}=v_1^{-1}$.  Thus there are three generators of the mapping class group $\{t_b,y,w_1\}$. 
\end{proof}

The boundary slide along $\gamma_1$ operates on the generators of $\pi_1(K)$ as follows:

\[ w_1: \left\{ \begin{array}{l}
      a \mapsto a\\
       b \mapsto b^{-1}\end{array} \right. \]

\section{\textbf{Classification of Simple Closed Curves on $K$}}

To classify simple closed curves on $K$, we rely heavily on the classification theorem for compact 2-manifolds, Stated in Massey, \cite[Theorem 5.1]{Mas}:

\begin{thm} Any compact 2-manifold is homeomorphic to either a sphere, connected sum of tori, or a connected sum of projective planes $\mathbb{R}\bf P^2$. 
\end{thm}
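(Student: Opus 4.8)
The plan is to follow the classical combinatorial route to the classification of compact surfaces, as in the proof behind Massey's Theorem 5.1. The first step reduces the problem to a finite combinatorial one: by Radó's theorem every compact $2$-manifold admits a finite triangulation. Ordering the triangles $T_1,\dots,T_n$ so that each $T_k$ with $k\ge 2$ meets the union of the earlier triangles in at least one edge, one glues them together in the plane one at a time and concludes that the surface is homeomorphic to a quotient of a convex polygon $P$ by an identification of its boundary edges in pairs. This data is encoded by an \emph{edge word}: a cyclic word in letters $x_i^{\pm1}$ in which each letter occurs exactly twice.

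The core of the argument is the normalization of this word by a finite sequence of cut-and-paste moves, each of which slices $P$ along a diagonal and reglues along an identified pair of boundary edges, leaving the quotient unchanged up to homeomorphism. In order: (i) cancel any adjacent pair $x x^{-1}$, unless the entire word is $x x^{-1}$, in which case the surface is $S^2$; (ii) merge all vertex equivalence classes into one; (iii) bring every like-oriented pair of occurrences $\dots x\dots x\dots$ together as $\dots xx\dots$, creating crosscaps; (iv) gather the remaining oppositely-oriented pairs into commutator blocks $xyx^{-1}y^{-1}$, creating handles; (v) apply Dyck's relation, replacing any crosscap-plus-handle block $xx\,yzy^{-1}z^{-1}$ by the three-crosscap block $xx\,yy\,zz$, which is justified by a short explicit surgery. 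What remains is one of the normal forms $aa^{-1}$, $\;a_1b_1a_1^{-1}b_1^{-1}\cdots a_gb_ga_g^{-1}b_g^{-1}$, or $\;a_1a_1\cdots a_ga_g$, i.e. the sphere, the connected sum of $g$ tori, or the connected sum of $g$ projective planes.

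To see that the list is irredundant --- not strictly needed for the bare statement but its natural complement --- one uses two invariants computed from the polygon: orientability (the word can be arranged so that each letter appears once with each sign) and the Euler characteristic $\chi=V-E+F$, which equals $2$ for the sphere, $2-2g$ for the orientable genus-$g$ surface, and $2-g$ for the connected sum of $g$ projective planes.

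I expect the genuine obstacle to be the triangulation step (Radó's theorem), which is technically involved and is normally imported wholesale; after that, the only care needed is bookkeeping --- checking that the handle- and crosscap-creating moves do not undo earlier progress (for instance that step (iv) does not resurrect like-oriented pairs eliminated in step (iii), which is exactly what forces Dyck's relation in step (v)). Accordingly I would cite Radó's theorem and Massey's Theorem 5.1 for the combinatorial details rather than reproduce them here, since for the present paper the content we actually need downstream is only the trichotomy of the statement.
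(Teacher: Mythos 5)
Your outline is the standard combinatorial proof (triangulate via Rad\'o, reduce to a polygon with paired edge identifications, normalize the edge word using cut-and-paste moves and Dyck's relation), which is exactly the argument behind the result the paper simply imports by citation from Massey \cite[Theorem 5.1]{Mas}; the paper gives no proof of its own. So your proposal is correct and follows essentially the same route as the cited source, with the technical steps (Rad\'o's theorem, the normalization bookkeeping) appropriately deferred to the references.
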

We say that a sphere has genus zero, and that a connected sum of tori or projective planes has genus equal to the number of summands. We declare the genus of a surface with boundary to be that of the closed surface obtained by joining a disk to each boundary component.  It should also be noted that genus is a topological invariant.

This result allows to distinguish possible simple closed curves $\gamma$ by Identifying the resultant surfaces after cutting along the specified curve.

\begin{lem} Every simple closed curve, $\gamma$, on $K$ is mapping class group equivalent to either $a, a^2, b,$ or $ab^{-1}a^{-1}b^{-1}$.\end{lem}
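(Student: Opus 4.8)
The statement to prove is that every simple closed curve $\gamma$ on the punctured Klein bottle $K$ is mapping-class-group equivalent to one of $a$, $a^2$, $b$, or $ab^{-1}a^{-1}b^{-1}$. Let me think about how I'd prove this.

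First, the key tool is the classification of surfaces (Theorem 3.1) applied to $K$ cut along $\gamma$. A simple closed curve on a surface is either separating or non-separating. Cut $K$ along $\gamma$. Since $K$ is the punctured Klein bottle, it has Euler characteristic $-1$ and one puncture. The cut surface(s) must have total Euler characteristic $-1$ (Euler characteristic is additive under cutting along a circle since a circle has $\chi = 0$), and the number of boundary/puncture components is constrained.

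So I'd enumerate the cases by topological type of the complement. The building blocks: cutting along a separating curve gives two pieces; cutting along a non-separating curve gives one piece (possibly with two new boundary circles, possibly one if the curve is one-sided). On a non-orientable surface there are one-sided curves, which is the new wrinkle compared to the torus case.

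Let me organize. Small-complexity surfaces with the right Euler characteristic: disk ($\chi=1$), annulus/Möbius band ($\chi=0$), pair of pants / punctured annulus / punctured Möbius band / Klein bottle with a hole, etc. The curve $\gamma$ could be: (i) trivial (bounds a disk) — excluded since we want essential curves, or this gives a degenerate case; (ii) parallel to the puncture (bounds a punctured disk) — this is a candidate, perhaps $b$ or something isotopic; (iii) two-sided non-separating; (iv) one-sided; (v) two-sided separating into more interesting pieces. For each topological configuration of the complement, one identifies a "standard" representative curve, and then the classification of surfaces guarantees that any two curves yielding homeomorphic (rel puncture/boundary data) complements are related by a homeomorphism of $K$, hence by a mapping class. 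So the finite list of complement-types gives a finite list of curves up to the mapping class group, and the content is checking that exactly four types arise (plus the trivial one) and that the four listed words $a, a^2, b, ab^{-1}a^{-1}b^{-1}$ realize them.

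**Execution steps, in order.** (1) Compute $\chi(K) = -1$ and fix the puncture data. (2) Break into cases: $\gamma$ one-sided vs. two-sided; if two-sided, separating vs. non-separating. (3) For $\gamma$ one-sided: cutting along $\gamma$ replaces a Möbius-band neighborhood with a boundary circle, raising $\chi$ by... (need to track) — the complement is a surface with $\chi = 0$ and one puncture, i.e.\ a punctured annulus or punctured Möbius band; analyze which occurs and show it forces $\gamma \simeq a$ (the core of the embedded Möbius band) up to the MCG. A one-sided curve whose complement is orientable-ish vs. not distinguishes $a$; and $a^2$ — wait, $a^2$ is two-sided (the square of a one-sided curve). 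So $a$ is the one-sided case, $a^2$ is a two-sided non-separating curve whose regular neighborhood is an annulus but which is "homologous to twice a one-sided curve." (4) For $\gamma$ two-sided non-separating: the complement is connected with two extra boundary circles plus the puncture, $\chi = -1$; identify it (a pair of pants, or a punctured Möbius band, etc.) and match to $a^2$ or $b$. (5) For $\gamma$ two-sided separating: two pieces, each with positive-or-zero $\chi$ summing to $-1$; one piece must be a once-punctured something; identify that the only essential option (not the boundary-parallel one) is the pants-type split giving $ab^{-1}a^{-1}b^{-1}$. (6) Conclude: every essential simple closed curve falls into one of four homeomorphism-types of complement, each realized by one of the four listed words, so by the classification of surfaces $\gamma$ is MCG-equivalent to that word.

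**Main obstacle.** The delicate part is not the Euler-characteristic bookkeeping but (a) correctly handling one-sided curves and the distinction between $a$ (one-sided, neighborhood a Möbius band) and $a^2$ (two-sided, but non-separating and non-primitive in the appropriate sense), which has no analogue in the punctured-torus story; and (b) pinning down, for each topological type of complement, that the classification of surfaces genuinely upgrades "homeomorphic complements" to "there is a homeomorphism of $K$ carrying one curve to the other" — one must check the homeomorphism of complements matches up the boundary-circle-versus-puncture labeling and the way pieces are glued, so that it extends over $K$. A subtle point is making sure the punctured Klein bottle's two essential two-sided separating or non-separating configurations really collapse to just the listed words and not to some a priori inequivalent curve; this is where having the explicit generators $t_b, y, w_1$ from Lemma 2.2 is useful as a sanity check, but the cleanest argument is purely topological via cut-and-classify, with the MCG computation deferred to the proof of Theorem 3.3.
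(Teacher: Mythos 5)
Your overall strategy --- cut $K$ along $\gamma$, do the Euler-characteristic bookkeeping, identify the possible homeomorphism types of the complement via the classification of surfaces, and realize each type by one of the four listed words --- is exactly the paper's argument (the paper runs the cases: separating with both pieces orientable, both non-orientable, or mixed, and non-separating with orientable or non-orientable complement and two or three boundary circles). The ``change of coordinates'' point you flag at the end is used just as implicitly in the paper, so that is not where you diverge.

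The genuine problem is your placement of $a^2$. The curve representing $a^2$ is the boundary of a M\"obius-band neighborhood of the one-sided curve $a$; it is two-sided but \emph{separating}, splitting $K$ into a M\"obius band and a once-punctured M\"obius band. It cannot arise in your step (4): a two-sided non-separating curve has connected complement with three boundary components (two from the cut plus the puncture) and $\chi=-1$, and the non-orientable possibility $2-g-3=-1$ forces $g=0$, which is disallowed, so the only non-separating two-sided type is the three-holed sphere, realized by $b$ alone --- there is no non-separating curve to ``match to $a^2$.'' Correspondingly your step (5) is incomplete: besides the boundary-parallel curve $ab^{-1}a^{-1}b^{-1}$ (which cuts off an annulus containing the puncture together with a holed Klein bottle, and which \emph{is} the boundary-parallel option, not something distinct from it), the separating case must also include the M\"obius band plus punctured M\"obius band splitting, and that is precisely where $a^2$ appears. (Also, ``two pieces, each with positive-or-zero $\chi$ summing to $-1$'' cannot be right; the pieces have $\chi\in\{0,-1\}$.) As written, your enumeration either fails to produce $a^2$ or derives it from a configuration that cannot occur; reassigning $a^2$ to the separating case closes the gap and brings your argument in line with the paper's.
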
 \bigskip

 It is useful to recall the Euler characteristic of an orientable surface is given by $\chi(S)=2-2g-i$ or $\chi(S)=2-g-i$ in the non-orientable case, and that the Euler characteristic is invariable under the removal of any simple closed curve,  $\chi(S)$=$\chi(S-\gamma)$.

\begin{proof}
 We consider the following cases:
\ \ \begin{enumerate}[(i)]
\item  $\gamma$  separates $K$ into two orientable surfaces $S_1$ and $S_2 $.

This case is clearly impossible since a union of two orientable surfaces must be orientable.

\item $\gamma$ separates  $K$ into two non-orientable surfaces  $S_1$ and  $S_2$.

We let $S_i$ have $i$ boundary components which restricts the genera to the relation $g_1+g_2=2$, where $g_i$ is a natural number. As such, $g_1=g_2=1$ is the only possibility. This implies $S_1$ is a Mobius band and $S_2$ is the Mobius band with one puncture.  This is produced by cutting $K$ along the closed curve represented by $a^2$ in the fundamental group.

\item $\gamma$ separates $K$  into an orientable surface $S_1$  and  a non-orientable  surface  $S_2.$
\subitem(a) $S_1$ is punctured.

Again the relation $\chi_1+\chi_2=-1$ restricts the genera to satisfy $2g_1+g_2=2$ with $g_1 \geq 0$ and $g_2 \geq 1$.  When $g_1=0$, $g_2=2$, which implies $S_1$ is the surface of Euler characteristic zero with two boundary components, i.e. the Annulus.  $S_2$ is the surface of genus $2$ with  Euler characteristic $-1$ and a single boundary component., i.e. the punctured Klein bottle. The disjoint union of the annulus and punctured Klein bottle arise by cutting  $K$ along $ab^{-1}a^{-1}b^{-1}$ in our representation of $\pi_1(K)$.
 
\subitem(b) $S_2$ is punctured.

 When the puncture is on the non-orientable surface, the same relation on the genera holds. When $g_1=0$, $S_1$ is a disk, so as in case (i), is not considered.

\item  $\gamma$ is non-separating and the resulting surface is orientable.
\subitem(a) $S$  has 2 boundary components.

The Euler characteristic is given by $\chi=2-2g-2=-1$.  There is not natural number satisfying the equation, so this case is not possible.

\subitem(b) $S$ has 3 boundary components.

$\gamma$ cuts $ K$ such that the resulting surface has $3$ boundary components, and satisfies $\chi=2-2g-3=-1$ which forces $g$ to be zero, and the surface to be the 3-holed sphere, attained by cutting along the generator $b$ of $\pi_1(K)$.

\item  $\gamma$ is non-separating and the resulting surface is non-orientable.

\subitem(a) $S$  has 2 boundary components.

$\gamma$ cuts $ K$ such that the resulting surface has $2$ boundary components, and satisfies $\chi=2-g-2=-1$, which forces $g$ to be $1$, so the surface to be the once punctured Mobius band and is produced by  Cutting $K$ along the curve represented by $a\in\pi_1(K)$.

\subitem(b) $S$ has 3 boundary components.

The Euler characteristic is $\chi=2-g-2=-1$, which implies $g=0$. The genus of a non-orientable surface must be greater than or equal to 1, so this case is disqualified.

\end{enumerate}
\end{proof}
\begin{thm} Let $\mathcal{C}$ be the set of words of the following following form, and their inverses:
\begin{enumerate}[(i)]

\item  $a^{\pm1}$, $a^{\pm2}$,  or $b^{\pm1}$
\item $ab^{-1}a^{-1}b^{-1}$
\item  $ab^n$ or $ab^nab^n$ for $n\in\mathbb{Z}$

\end{enumerate}

Every simple closed curve on $K$ represents a word in $\mathcal{C}$, and every word in $\mathcal{C}$ is represented by a simple closed curve. 

\end{thm}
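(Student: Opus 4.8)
The plan is to deduce both halves of the statement from the two preceding lemmas. By Lemma 3.2, every simple closed curve on $K$ is carried by some element of $\mathrm{MCG}(K)$ to one of the four model curves representing the words $a$, $a^{2}$, $b$, $ab^{-1}a^{-1}b^{-1}$, and each of these four words is itself represented by a simple closed curve on $K$ (these are the cutting curves produced in the proof of that lemma). Since a simple closed curve is a free homotopy class, i.e.\ a conjugacy class in the free group $\pi_{1}K=\langle a,b\rangle$, and since homeomorphisms of $K$ carry simple closed curves to simple closed curves and act on free homotopy classes compatibly with the action of $\mathrm{MCG}(K)$ on $\pi_{1}K$ (which is well defined up to inner automorphism), Lemma 2.2 reduces the entire theorem to one algebraic statement: the union of the orbits of the conjugacy classes $[a]$, $[a^{2}]$, $[b]$, $[ab^{-1}a^{-1}b^{-1}]$ under the group $G=\langle t_{b},y,w_{1}\rangle$ generated by the three automorphisms of Lemma 2.2 coincides with the set of conjugacy classes of words in $\mathcal{C}$. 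Given this, ``every simple closed curve represents a word in $\mathcal{C}$'' is the inclusion $(\text{orbits})\subseteq\mathcal{C}$ together with Lemma 3.2, and ``every word in $\mathcal{C}$ is represented by a simple closed curve'' is the reverse inclusion together with the fact that the four model words are themselves represented by simple closed curves.

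What remains is to compute the four orbits, by iterating $t_{b}^{\pm1}$, $y$, and $w_{1}$ (with $y^{2}=w_{1}^{2}=\id$) on the four model words and cyclically reducing. For $b$ the orbit is immediate: $t_{b}$ and $y$ fix $b$ and $w_{1}(b)=b^{-1}$, so the orbit is $\{[b],[b^{-1}]\}$, namely item (i)'s $b^{\pm1}$. For $a$, the key relation is $t_{b}^{\,n}(a)=ab^{n}$, so the orbit contains every $ab^{n}$ and, after reversing orientation, every $(ab^{n})^{-1}$; conversely one verifies that the set $S_{a}=\{[ab^{n}],[(ab^{n})^{-1}]:n\in\ZZ\}$ of conjugacy classes is carried into itself by each of $t_{b}^{\pm1}$, $y$, $w_{1}$, so the orbit of $[a]$ equals $S_{a}$. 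Comparing with $\mathcal{C}$, the set $S_{a}$ is exactly item (iii)'s family $ab^{n}$ together with its inverses (and, at $n=0$, item (i)'s $a^{\pm1}$). The case of $a^{2}$ is parallel: $t_{b}^{\,n}(a^{2})=(ab^{n})^{2}=ab^{n}ab^{n}$, and the set $S_{a^{2}}=\{[ab^{n}ab^{n}],[(ab^{n}ab^{n})^{-1}]:n\in\ZZ\}$ is $G$-invariant, so the orbit of $[a^{2}]$ equals $S_{a^{2}}$, which is item (iii)'s family $ab^{n}ab^{n}$ together with its inverses (and, at $n=0$, item (i)'s $a^{\pm2}$). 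Finally, for $\delta=ab^{-1}a^{-1}b^{-1}$ one substitution in each generator shows $t_{b}(\delta)=\delta$, $y(\delta)$ conjugate to $\delta$, and $w_{1}(\delta)$ conjugate to $\delta^{-1}$, so the orbit of $[\delta]$ is $\{[\delta],[\delta^{-1}]\}$, which is item (ii) and its inverse. Assembling the four orbits recovers $\mathcal{C}$ exactly.

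The main obstacle, routine but error-prone, will be the inclusions $(\text{orbit})\subseteq\mathcal{C}$, i.e.\ the $G$-invariance of $S_{a}$ and of $S_{a^{2}}$. The generators $y$ and $w_{1}$ do not preserve these word forms on the nose (for instance $y(ab^{n})=a^{-1}b^{n}$ and $w_{1}(ab^{n})=ab^{-n}$), so one must track conjugation carefully: $a^{-1}b^{n}$ is a cyclic rotation of $b^{n}a^{-1}=(ab^{-n})^{-1}$, and likewise $a^{-1}b^{n}a^{-1}b^{n}$ cyclically reduces to a conjugate of $(ab^{-n}ab^{-n})^{-1}$, so that the image of a word of $S_{a}$ (resp.\ of $S_{a^{2}}$) again lies in $S_{a}$ (resp.\ in $S_{a^{2}}$). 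Once this bookkeeping is done for $y$, $w_{1}$, and $t_{b}^{\pm1}$, closure of $S_{a}$ and $S_{a^{2}}$ under all of $G$ follows, and the theorem is proved. (One also sees the four orbits are pairwise disjoint: their images in $H_{1}(K)=\ZZ^{2}$ are $\{(0,\pm1)\}$, $\{(\pm1,m):m\in\ZZ\}$, $\{(\pm2,2m):m\in\ZZ\}$, and $\{(0,\pm2)\}$, which are pairwise disjoint; hence $\mathcal{C}$ decomposes into exactly four $\mathrm{MCG}(K)$-orbits, as anticipated in the introduction.)
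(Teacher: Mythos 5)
Your proposal is correct and follows essentially the same route as the paper: it combines Lemma 3.2 (every simple closed curve is mapping-class-group equivalent to one of $a$, $a^{2}$, $b$, $ab^{-1}a^{-1}b^{-1}$) and Lemma 2.2 (generators $t_{b}$, $y$, $w_{1}$) with the same generator-by-generator computations, up to conjugacy, that the paper records as Lemmas 3.4 and 3.5. The only difference is cosmetic: you show each of the four word families is separately invariant (yielding the four-orbit decomposition and the homology disjointness remark), whereas the paper verifies invariance of $\mathcal{C}$ as a whole and reachability of every word in $\mathcal{C}$ from $\mathcal{C}_{0}$.
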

This comports with the description of the punctured Klein bottle's complex of curves given on \cite[pp.~175--176]{Scharlemann}, with $\bar{a}$ there in the role of $b$ here and vice-versa.  (Note that the boundary is ignored in the complex of curves of \cite{Scharlemann}, as are the squares of one-sided curves.)\

We use the following lemmas to prove Theorem 3.3.
 \begin{lem}
 The  mapping class group preserves the structure of $\mathcal{C}$, i.e. for any element $[\phi]$ in the mapping class group, and any  word form $c\in\mathcal{C}$, $\phi(c)$ is conjugate into $\mathcal{C}$. 
 \end{lem}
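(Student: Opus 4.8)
The plan is to reduce the statement to a finite check on the three generators of the mapping class group of $K$ produced in Lemma 2.2, namely $t_b$, $y$, and $w_1$, using the explicit formulas for their actions on $a,b\in\pi_1(K)$ recorded above. Two preliminary observations make this reduction clean. First, $\pi_1(K)$ is free on $a$ and $b$, so each of $t_b,y,w_1$ is an automorphism of a free group; since automorphisms carry conjugates to conjugates, the property ``$\phi(c)$ is conjugate into $\mathcal{C}$ for every $c\in\mathcal{C}$'' is stable under composition of such maps. Hence it suffices to verify it for each generator, together with $t_b^{-1}$ (as $t_b$ has infinite order); note that $y$ and $w_1$ induce involutions of $\pi_1(K)$, so their inverses give nothing new. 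Second, $\mathcal{C}$ is closed under taking inverses and we only care about conjugacy classes, and $a=ab^0$ and $a^2=ab^0ab^0$ are instances of the type (iii) forms, so it is enough to test the representatives $b$, $ab^{-1}a^{-1}b^{-1}$, $ab^n$, and $ab^nab^n$ for $n\in\mathbb{Z}$.

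The main work is then a short table: apply each of $t_b$, $t_b^{-1}$, $y$, $w_1$ to each of these four representatives and freely reduce the result. The Dehn twist cases are transparent: $t_b^{\pm1}$ fixes $b$, sends $ab^n\mapsto ab^{n\pm1}$ and $ab^nab^n\mapsto ab^{n\pm1}ab^{n\pm1}$, and — after the internal cancellation $b\,b^{-1}$ — fixes $ab^{-1}a^{-1}b^{-1}$, so every output is literally a form in $\mathcal{C}$. The map $w_1$ sends $ab^n\mapsto ab^{-n}$ and $ab^nab^n\mapsto ab^{-n}ab^{-n}$, again landing in $\mathcal{C}$, while $y$ sends $ab^n\mapsto a^{-1}b^n$ and $ab^nab^n\mapsto a^{-1}b^na^{-1}b^n$.

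The only genuinely nonautomatic point is recognizing the handful of outputs that are not listed forms verbatim as conjugates of listed forms, which here always happens via a single cyclic permutation. For instance $a^{-1}b^n$ is the cyclic permutation $b^na^{-1}=(ab^{-n})^{-1}$ of the inverse of a type (iii) word; $a^{-1}b^na^{-1}b^n$ is a cyclic permutation of $(ab^{-n}ab^{-n})^{-1}$; and both $y(ab^{-1}a^{-1}b^{-1})=a^{-1}b^{-1}ab^{-1}$ and $w_1(ab^{-1}a^{-1}b^{-1})=aba^{-1}b$ are cyclic permutations of $ab^{-1}a^{-1}b^{-1}$ or its inverse $baba^{-1}$. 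Since a cyclic permutation of a word is conjugate to it, each output is conjugate into $\mathcal{C}$, and by the composition observation above the lemma follows. The only thing to be careful about is performing the free reductions (especially on $\phi(ab^{-1}a^{-1}b^{-1})$ for $\phi\in\{t_b^{\pm1},y,w_1\}$) correctly and matching each cyclically reduced output to the right cyclic shift of an element of $\mathcal{C}$ or its inverse; no conceptual obstacle arises.
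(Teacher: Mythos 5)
Your proposal is correct and follows essentially the same route as the paper: verify the action of the three generators $t_b$, $y$, $w_1$ from Lemma 2.2 on each word form of $\mathcal{C}$ and recognize the outputs, after free reduction, as cyclic permutations (hence conjugates) of forms in $\mathcal{C}$ or their inverses. You are in fact slightly more careful than the paper in spelling out why it suffices to check generators (closure of the property under composition of automorphisms) and in explicitly handling $t_b^{-1}$, points the paper leaves implicit.
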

 \begin{proof}\ \ \ 

\textit{\underline{Case 1:}} $t_b(c)\in\mathcal{C}$ for any $c\in\mathcal{C}$.

By definition of the Dehn twist around $b$, $t_b(a)=ab$, $t_b(a^2)=abab$, and $tb(b)=b$.  

On the boundary curves beginning with $a$, $t_b$ acts trivially;  $t_b(ab^{-1}a^{-1}b^{-1})=ab^{-1}a^{-1}b^{-1}$.

 Applying $t_b$ to $a^{-1}b^n$ gives $b^{-1}a^{-1}b^n$, which is conjugate to $a^{-1}b^{n-1}$, and  $t_b$ applied to $ a^{-1}b^{n}a^{-1}b^{n}$ equals $b^{-1}a^{-1}b^{n-1}a^{-1}b^{n}$ which is conjugate to $a^{-1}b^{n-1}a^{-1}b^{n-1}$. Finally, $t_b$ maps the elements $ab^{n}$ and $ab^{n}ab^{n}$ to $ab^{n+1}$ and $ab^{n+1}ab^{n+1}$ respectively.  \bigskip

\textit{\underline{Case 2:}} $w_1(c)\in\mathcal{C}$ for any $c\in\mathcal{C}$.

Our boundary slide fixes $a$ and takes  $b$ to its inverse, so acting on any word in $\mathcal{C}$ will

 preserve the structure of the product exchanging powers of $b$ for their negative.\bigskip

 \textit{\underline{Case 3:}} $y(c)\in\mathcal{C}$ for any $c\in\mathcal{C}$.
 \nopagebreak
 Similarly, the $Y$-homeomorphism  fixes $b$ and takes $a$ 
 
 to its inverse, so the argument in the previous case applies, which concludes this Lemma.
 \end{proof}
  \begin{lem}
 Let $\mathcal{C}_0=\{a,b,a^{2},ab^{-1}a^{-1}b^{-1}\}$ for any $c\in\mathcal{C}$ there is a mapping class group element  $[\phi]$ such that $\phi(c)$ is conjugate to some $c_0\in\mathcal{ C}_0$ or its inverse.
  \end{lem}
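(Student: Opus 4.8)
The plan is to show that every word form $c \in \mathcal{C}$ can be driven by a sequence of mapping class group generators into one of the four model words $\{a, b, a^2, ab^{-1}a^{-1}b^{-1}\}$, working case by case through the list of word forms given in Theorem 3.3. By Lemma 3.2 we already know that there are at most four mapping class group orbits of simple closed curves — one for each of $a, a^2, b, ab^{-1}a^{-1}b^{-1}$ — and by Lemma 3.4 the mapping class group preserves $\mathcal{C}$; the present lemma is the complementary statement that pins down exactly which orbit each word form lands in. So the real content is a bookkeeping argument using the three generator actions already recorded: $t_b$ sends $a \mapsto ab$, $b \mapsto b$; $y$ sends $a \mapsto a^{-1}$, $b \mapsto b$; and $w_1$ sends $a \mapsto a$, $b \mapsto b^{-1}$.

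First I would dispose of the words in families (i) and (ii): the elements $a^{\pm 1}$, $a^{\pm 2}$, $b^{\pm 1}$, and $ab^{-1}a^{-1}b^{-1}$ together with its inverse. The positive representatives $a, a^2, b$ are literally in $\mathcal{C}_0$, so nothing is needed there. For $a^{-1}$ and $a^{-2}$, apply $y$ to get $a$ and (after tracking $y(a^2) = a^{-1} \cdot a^{-1} = a^{-2}$, so instead) use that $y(a^{-2}) = a^2$; for $b^{-1}$, apply $w_1$ to get $b$. For $ab^{-1}a^{-1}b^{-1}$ there is nothing to do; for its inverse $bab a^{-1}$ I would conjugate and apply $w_1$ and $y$ as needed to return to the model form, using that the boundary curve's orbit is a single mapping class group orbit (this is the content of Lemma 3.2(iii)).

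The main work is family (iii): the words $ab^n$ and $ab^n a b^n$ for $n \in \mathbb{Z}$, and their inverses. Here the key observation is that $t_b$ acts on exponents of $b$: from Case 1 of the proof of Lemma 3.4, $t_b(ab^n) = ab^{n+1}$ and $t_b(ab^n a b^n) = ab^{n+1}ab^{n+1}$, while $t_b^{-1}$ decrements. Hence by applying a suitable power of $t_b$ we reduce any $ab^n$ to $ab^0 = a$ and any $ab^n a b^n$ to $a \cdot a = a^2$, landing in $\mathcal{C}_0$. For the inverses $b^{-n}a^{-1}$ and $b^{-n}a^{-1}b^{-n}a^{-1}$, first apply $y$ to flip $a^{-1}$ to $a$ (giving $b^{-n}a$, which is conjugate to $ab^{-n}$, a word already handled) and $w_1$ to flip the $b$-powers if convenient, then run the same $t_b$-reduction. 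I would present this as: conjugate each inverse word into $ab^m$ or $ab^m a b^m$ form for appropriate $m$ using $y$ and cyclic conjugation, then kill the $b$-exponent with $t_b^{\mp m}$.

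The step I expect to be the main obstacle — or at least the one requiring the most care — is keeping the conjugation bookkeeping honest: the mapping class group acts on $\pi_1(K)$ only up to conjugacy (equivalently, on free homotopy classes of loops), so at several points, e.g. when $t_b(a^{-1}b^n) = b^{-1}a^{-1}b^n$ is declared "conjugate to $a^{-1}b^{n-1}$," I must make sure the conjugating element is genuinely in $\pi_1(K)$ and that I am consistently identifying a word with its conjugacy class throughout the reduction. Once that convention is fixed and stated, each individual case is a one-line computation with the three generator formulas, and the lemma follows by exhausting the (finite number of) word-form templates in $\mathcal{C}$. I would close by remarking that combining this lemma with Lemma 3.4 and Lemma 3.2 gives Theorem 3.3: Lemma 3.2 shows every simple closed curve is equivalent to one of the four models and hence (since those models lie in $\mathcal{C}$ and $\mathcal{C}$ is generator-closed by Lemma 3.4) represents a word in $\mathcal{C}$, while this lemma together with Lemma 3.4 shows conversely that every word in $\mathcal{C}$ is mapping-class-equivalent to a model, which is represented by a simple closed curve, and therefore so is the original word.
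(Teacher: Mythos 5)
Your proposal is correct and is essentially the paper's own argument: both rest on the same explicit generator computations ($t_b^{n}(a)=ab^{n}$, $t_b^{n}(a^{2})=ab^{n}ab^{n}$, with $y$ and $w_1$ flipping the signs of $a$ and $b$ respectively, up to conjugation). The only difference is direction of presentation — the paper generates all word forms of $\mathcal{C}$ from $\mathcal{C}_0$ and notes the maps can be run backwards, while you reduce each word form of $\mathcal{C}$ to $\mathcal{C}_0$ by the inverse mapping classes — which is the same proof since mapping classes are invertible.
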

 
 \begin{proof}  Clearly $y:a\mapsto a^{-1}$ , $y:a^{-2}\mapsto a^{2}$,  and $w_1:b\mapsto b^{-1}$, and it is obvious the maps hold in the other direction as well.

The boundary slide takes $ab^{-1}a^{-1}b^{-1}$ to $ aba^{-1}b$ and again can be applied in the opposite direction. An analogous statement is  true for  $w_1$ acting on $a^{-1}b^{-1}ab^{-1}$ and  $ a^{-1}bab$.

The Dehn twist around $b$  maps $a\mapsto ab$ and by induction, $t_b^{n}(a)=ab^{n}$.  and similarly, $t_b: a^{2}\mapsto abab$ and again, $t_b^{n}(a^{2})=ab^{n}ab^{n}$ follows by induction.
Applying $y$ to $ab^{n}ab^{n}$ and $ab^{n}$ yields $a^{-1}b^{n}a^{-1}b^{n}$ and $a^{-1}b^{n}$ respectively.  Acting on  $ab^{n}ab^{n}$, $ab^{n}$, $a^{-1}b^{n}a^{-1}b^{n}$ and $a^{-1}b^{n}$ with the boundary slide produces a representation of the remaining classes of words in $\mathcal{C}$.
\end{proof}

Now, we prove Theorem 3.3.
\ \ \begin{proof}
Let $c$ be a simple closed curve on $K$.  Lemma 3.2 implies there is a mapping class group element such that $\phi(c)$ is conjugate to some $c_0\in\mathcal{C}_0$.  It follows by Lemma 3.4 that $\phi^{-1}(c_0)$ which represents $c$ is conjugate to a word in $\mathcal{C}$. Since every word in $\mathcal{C}_0$ represents a simple closed curve, and mapping classes take simple closed curves to simple closed curves, Lemma 3.5 impies every word in $\mathcal{C}$ represents a simple closed curve.
\end{proof}

 Thus, we have an explicit description of Simple closed curves on $K$ up to inner automorphism, which corresponds to words permitted in $\mathcal{C}$.

\section{\textbf{ Counter example to the Simple Loop Conjecture for $PGL(2,\mathbb{R})$}}

In this section we rely on a representation of $\pi_1(K)$ in $PGL(2,\mathbb{R})$.  Since our simple closed curves represented by  $a$ and $b$ freely generate  $\pi_1(K)$, any choice of $\alpha$ and $\beta$ such that $\alpha,\beta\neq0$ defines a representation $\rho:\pi_1(K)\to PGL(2,\mathbb{R})$.

\[ \rho(a)=\left( \begin{array}{cc}\alpha & 0 \\   0 & \alpha^{-1}  \end{array} \right), \ \ \ \ \  \ \ \ \  \rho(b)=\left( \begin{array}{cc}\beta & 1 \\  0 & \beta^{-1}  \end{array} \right)    \]    

This representation allows for the extension of \cite[Theorem 1.3]{ Ma} to the punctured Klein bottle as follows.
\begin{thm} Let $K$ be the punctured  Klein bottle;  Let $\alpha$ and $\beta$  satisfy $\alpha^{k}\beta^{l}\neq\pm1$ for any integers $k,l\in\mathbb{Z}\!\setminus\!\{0\}$ (note that it is sufficient to choose $\alpha$ and $\beta$ to be relatively prime integers). Then $\rho:\pi_1(K)\to PGL(2,\mathbb R)$ satisfies

\begin{enumerate}[1.]
\item $\rho$ is not injective.

\item If $\rho(\alpha)=\pm I$, then $\alpha$ is not represented by a simple closed curve.

\item If $\alpha$ is represented by a simple closed curve, then $\rho(\alpha^k)\neq I$ for any $k\in \mathbb Z \!\not\{0\}$

\end{enumerate}
\end{thm}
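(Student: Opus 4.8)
The plan is to establish the three claims in sequence, using the classification $\mathcal{C}$ from Theorem 3.3 together with direct matrix computation in $PGL(2,\mathbb{R})$. For claim (1), I would exhibit an explicit nontrivial element of the kernel. The natural candidate is the commutator-type word $aba^{-1}b$ or a conjugate thereof: since $\rho(a)$ is diagonal, conjugation by $\rho(a)$ scales the off-diagonal entry of $\rho(b)$ by $\alpha^{2}$, and one can look for a word $w(a,b)$ whose image is $\pm I$ while $w\neq 1$ in the free group $\pi_1(K)=\langle a,b\rangle$. Concretely I would compute $\rho(b)\rho(a)\rho(b^{-1})\rho(a^{-1})$ or similar and solve for when it equals $\pm I$; because the representation variety into $PGL(2,\mathbb{R})$ of a rank-2 free group is $6$-dimensional while the group is $3$-dimensional, non-injectivity is generic, and the hypothesis $\alpha^{k}\beta^{l}\neq\pm1$ is exactly what is needed to avoid the degenerate cases. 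The key point is to pin down one such word \emph{explicitly} and check it is reduced and nonempty.

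For claim (2): if $\rho(a)=\pm I$ then $\alpha^{2}=\pm1$ would follow from the diagonal form, but more to the point, if $a$ \emph{were} represented by a simple closed curve we could apply Theorem 3.3. By Lemma 3.4 (or directly from $\mathcal{C}$), any word representing a simple closed curve is conjugate to one of the listed normal forms; the hypothesis $\alpha^k\beta^l \neq \pm 1$ is used to show that for each normal form $c\in\mathcal{C}$ the matrix $\rho(c)$ is \emph{not} $\pm I$ unless $c$ is trivial. This is a finite check over the families $a^{\pm1},a^{\pm2},b^{\pm1},ab^{-1}a^{-1}b^{-1},ab^n,ab^nab^n$: each reduces to computing a product of two diagonalizable matrices and reading off entries, and in each case $\rho(c)=\pm I$ forces a relation of the form $\alpha^{k}\beta^{l}=\pm1$ with $(k,l)\neq(0,0)$, contradicting the hypothesis. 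Since conjugation does not change whether an image is central, this suffices. Claim (3) is then essentially the contrapositive packaging of (2): if $\alpha$ is represented by a simple closed curve, it is conjugate to some $c\in\mathcal{C}_0$-type normal form, and the same computation shows $\rho(c^k)=\rho(c)^k = I$ is impossible for $k\neq 0$ because $\rho(c)$ has an eigenvalue of the form $\pm\alpha^{k'}\beta^{l'}\neq\pm1$, hence infinite order in $PGL(2,\mathbb{R})$.

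I expect the main obstacle to be claim (1): producing a clean, explicitly written nontrivial kernel element whose image is verifiably $\pm I$ under the stated genericity hypothesis, rather than merely arguing non-injectivity by a dimension count. One should be careful that the element chosen is genuinely reduced in the free group (so it is not accidentally trivial) and that no simple closed curve lies in the kernel — but that last part is handled by claims (2)–(3) via Theorem 3.3, which is the whole reason the classification is needed. The matrix computations in (2) and (3) are routine but must be organized so that each normal-form family is handled uniformly; the cleanest route is to note every element of $\mathcal{C}$ is a product of at most two ``blocks'' $\rho(a^{\pm1})$ or $\rho(a^{\pm1})\rho(b)^{n}$, compute the trace and off-diagonal entries of such a product once in terms of $\alpha,\beta,n$, and then observe that centrality forces $\alpha^k\beta^l=\pm1$.
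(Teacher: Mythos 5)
Your handling of claims (2) and (3) is essentially the paper's argument: every normal form in $\mathcal{C}$ (and hence every conjugate of it, and every power) maps to an upper triangular matrix whose $(1,1)$-entry is a monomial $\alpha^{k}\beta^{l}$ with $(k,l)\neq(0,0)$, so under the hypothesis such a matrix is never $\pm I$; the paper organizes this exactly as you do, via Theorem 3.3 and a direct entry computation, so no complaint there.

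The genuine gap is in claim (1). The paper never exhibits a kernel element; it argues structurally that the image of $\rho$ lies in the upper triangular subgroup, whose commutator subgroup is unipotent and abelian, so the image is two-step solvable, while $\pi_1(K)$ (free of rank $2$) is not solvable since it surjects onto $A_5$ --- hence $\rho$ cannot be injective. Your proposal promises an explicit kernel word but does not produce one, and both of your fallback justifications fail. First, the candidates you name are not in the kernel: a direct computation gives $\rho(aba^{-1}b^{-1})=\left(\begin{smallmatrix}1 & \beta(\alpha^{2}-1)\\ 0 & 1\end{smallmatrix}\right)$, a nontrivial unipotent, and $\rho(aba^{-1}b)$ has $(1,1)$-entry $\beta^{2}\neq\pm1$; no length-four commutator-type word dies under the stated hypothesis. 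Second, the dimension count is not a valid argument and is in fact backwards: for each nontrivial $w\in F_2$ the locus $\{w(A,B)=\pm I\}$ is a proper subvariety of the representation variety, so a \emph{generic} pair $(A,B)$ in $PGL(2,\mathbb{R})$ generates a free group (ping-pong pairs exist), and non-injectivity is special, not generic; moreover the hypothesis $\alpha^{k}\beta^{l}\neq\pm1$ plays no role in claim (1) --- what forces non-injectivity is precisely that this particular $\rho$ has upper triangular, hence solvable, image. If you want to keep your element-level strategy, the repair is to go one step deeper in the derived series: take any nontrivial element of the second derived subgroup of the free group, for instance $w=[[a,b],[a,b^{2}]]$, which is $\neq 1$ in $F_2$ because $[a,b]$ and $[a,b^{2}]$ do not commute there; its image is a commutator of two upper triangular unipotent matrices, which commute, so $\rho(w)=I$. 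That explicit word is exactly the element-level translation of the paper's solvability argument.
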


\begin{proof}
To show The representation is non-injective, we first show $\pi_1(K)$ is not solvable since it has a quotient  which is not solvable. This fact follows since there exists a homomorphism $f:\pi_1(K)\to A_5$ where $A_5=\left<(123),(345)\right>$ the alternating group of even permutations on $5$ elements defined by $a\mapsto(123)$ and $b\mapsto(345)$\cite[Pg. 111, Exercise 16]{Du}.  The group $A_5$ is non commutative and simple. If  we assume for a contradiction that $A_5$ is solvable, by simplicity,  the only composition series is $A_5\trianglerighteq1$, which implies the quotient group $A_5/1=A_5$ is abelian since quotient groups of consecutive terms of a composition series are abelian. This is our contradiction, thus $A_5$ is not solvable.  It is a common fact that a quotient of a solvable group is solvable, which by contrapositive, shows $\pi_1(K)$ is not solvable.

Our representation $\rho$ of $\pi_1(K)$ in contrast is $2$ step solvable since any element $X$ in $G^{1}=[PGL(2,\mathbb{R}),PGL(2,\mathbb{R})]$ is of the form,
 \[ X=\left( \begin{array}{cc}1& x \\   0 & 1  \end{array} \right), \ \ \ \ \  \ \ \ \  X^{-1}=\left( \begin{array}{cc}1 & -x\\  0 & 1  \end{array} \right)    \] . And by direct computation, we see that any element in $G^{2}=[G^{1},G^{1}]$ is given by a product \[XYX^{-1}Y^{-1}= \left( \begin{array}{cc}1& x+y-x-y \\   0 & 1  \end{array} \right)= \left( \begin{array}{cc}1 & 0\\  0 & 1  \end{array} \right)  \]Thus, $\rho$ has a composition series terminating in the trivial group. This implies $\rho$ is solvable and non-injective proving $1$ from Theorem 4.1.
 
To show  consequences $2$ and $3$ of Theorem 4.1, it is enough to note that products of upper triangular matrices are upper triangular, and as a consequence have $1,1$-entries in the form of words in $\mathcal{C}$ prior to reduction in $\mathbb{R}$.   Since no word form in $\mathcal{C}$ reduces to a nontrivial product of $\alpha$ and $\beta$ or their inverses in $\mathbb{R}$, the top left entry of any matrix representing a simple closed curve in PGL$(2,\mathbb{R})$ is never of the form $\alpha^{k}\beta^{l}$ with $k$ and $l$ both $0$.  Thus, no matrix representation or any power of a simple closed curve can be the identity.  No power of simple closed curve is in the kernel of our representation $\rho$.
\end{proof}

\bibliographystyle{plain}
\bibliography{thesisbib}

\begin{thebibliography}{1}

\bibitem{Be}
Joan~S Birman and Caroline Series.
\newblock An algorithm for simple curves on surfaces.
\newblock {\em J. London Math. Soc.(2)}, 29(2):331--342, 1984.

\bibitem{Du}
David~Steven Dummit and Richard~M Foote.
\newblock {\em Abstract algebra}, volume~3.
\newblock Wiley Hoboken, 2004.

\bibitem{Ga}
David Gabai et~al.
\newblock The simple loop conjecture.
\newblock {\em Journal of Differential Geometry}, 21(1):143--149, 1985.

\bibitem{Ko}
Mustafa Korkmaz.
\newblock Mapping class groups of nonorientable surfaces.
\newblock {\em Geom. Dedicata}, 89:109--133, 2002.

\bibitem{Li}
W.~B.~R. Lickorish.
\newblock Homeomorphisms of non-orientable two-manifolds.
\newblock {\em Proc. Cambridge Philos. Soc.}, 59:307--317, 1963.

\bibitem{Ma}
Kathryn Mann.
\newblock A counterexample to the simple loop conjecture for {$\rm {PSL}(2,\Bbb
  R)$}.
\newblock {\em Pacific J. Math.}, 269(2):425--432, 2014.

\bibitem{Mas}
William~S. Massey.
\newblock {\em A basic course in algebraic topology}, volume 127 of {\em
  Graduate Texts in Mathematics}.
\newblock Springer-Verlag, New York, 1991.

\bibitem{Na}
Toshihiro Nakanishi.
\newblock A series associated to generating pairs of a once punctured torus
  group and a proof of {M}c{S}hane's identity.
\newblock {\em Hiroshima Math. J.}, 41(1):11--25, 2011.

\bibitem{Scharlemann}
Martin Scharlemann.
\newblock The complex of curves on nonorientable surfaces.
\newblock {\em J. London Math. Soc. (2)}, 25(1):171--184, 1982.

\end{thebibliography}

\end{document}